\def\E{{\mathbb E}}
\def\Z{{\mathbb Z}}
\theoremstyle{theorem}
\newtheorem{Thm}{Theorem}[section]
\newtheorem{Cor}[Thm]{Corollary}
\newtheorem{Conj}{Conjecture}
\theoremstyle{definition}
\newtheorem{Def}[Thm]{Definition}
\theoremstyle{remark}
\newtheorem*{remark}{Remark}
\newcommand{\conjref}[1]{\hyperref[#1]{Conjecture~\ref*{#1}}}
\newcommand{\thmref}[1]{\hyperref[#1]{Theorem~\ref*{#1}}}
\newcommand{\lemmaref}[1]{\hyperref[#1]{Lemma~\ref*{#1}}}
\newcommand{\corref}[1]{\hyperref[#1]{Corollary~\ref*{#1}}}
\newcommand{\propref}[1]{\hyperref[#1]{Proposition~\ref*{#1}}}
\newcommand{\secref}[1]{\hyperref[#1]{Section~\ref*{#1}}}
\newcommand{\refref}[1]{\hyperref[#1]{[\ref*{#1}]}}
\newcommand{\egref}[1]{\hyperref[#1]{Example~\ref*{#1}}}
\newcommand{\problemref}[1]{\hyperref[#1]{Problem~(\ref*{#1})}}
\newcommand{\figref}[1]{\hyperref[#1]{Figure~\ref*{#1}}}
\begin{document}  
\title{A Conjecture on Induced Subgraphs of Cayley Graphs}
\author{Aaron Potechin\thanks{Department of Computer Science, University of Chicago, Chicago, IL. Email: potechin@uchicago.edu} \quad \quad Hing Yin Tsang\thanks{Department of Computer Science, University of Chicago, Chicago, IL. Email: hytsang@cs.uchicago.edu}}
\date{\today}
\maketitle

\abstract{In this paper, we propose the following conjecture which generalizes a theorem proved by Huang \cite{Hua19} in his recent breakthrough proof of the sensitivity conjecture. We conjecture that for any Cayley graph $X = \Gamma(G,S)$ on a group $G$ and any generating set $S$, if $U \subseteq G$ has size $|U| > |G|/2$, then the induced subgraph of $X$ on $U$ has maximum degree at least $\sqrt{|S|/2}$}. Using a recent idea of Alon and Zheng \cite{AZ20}, who proved this conjecture for the special case when $G = \mathbb{Z}_2^n$, we prove that this conjecture is true whenever $G$ is abelian. We also observe that for this conjecture to hold for a graph $X$, some symmetry is required: it is insufficient for $X$ to just be regular and bipartite.

\section{Introduction}
Huang \cite{Hua19} gave a remarkably simple and elegant proof of the Sensitivity Conjecture of Nisan and Szegdy \cite{NS94}. His proof showed that any subset $U$ of the $n$-dimensional Boolean cube of size $|U| > 2^{n-1}$ induces a subgraph with maximum degree at least $\sqrt{n}$, which improves a lower bound of Chung et al. \cite{CFGS88} exponentially and is known to imply the Sensitivity Conjecture \cite{GL92}.

The proof has attracted considerable attention and there are already a number of extensions and generalizations to both the technique and the result itself. Among those, Alon and Zheng \cite{AZ20} proved that Huang's result implies the same phenomenon for not just the Boolean cube, but any Cayley graph of $\Z_2^n$. In this paper, we prove that the same phenomenon holds for any Cayley graph of any abelian group and conjecture that it holds for any Cayley graph.
\subsection{Basic definitions and statement of the conjecture}
We need the following basic definitions. All the groups we consider in this paper are finite.
\begin{Def}[Cayley Graphs]
Given a group $G$ and a set of non-identity elements $S$ of $G$, the Cayley graph $X = \Gamma(G,S)$ is the graph with vertices $V(X) = G$ and edges $E(X) = \{(g,sg): g \in G, s \in S\}$. Here we take $X = \Gamma(G,S)$ to be undirected and we assume that $S$ is symmetric, i.e. if $s \in S$ then $s^{-1} \in S$.
\end{Def}
\begin{remark}
Without loss of generality, we can assume that $S$ generates $G$ as otherwise, letting $G'$ be the subgroup of $G$ which is generated by $S$, $\Gamma(G,S)$ consists of $\frac{|G|}{|G'|}$ disjoint copies of $\Gamma(G',S)$.
\end{remark}
\begin{Def}[Boolean Hypercube]
We define $Q_n$ to be the $n$ dimensional boolean hypercube, i.e. $Q_n = \Gamma(Z_2^n,\{e_i: i \in [n]\})$.
\end{Def}
\begin{Def}[Induced Subgraphs]
Given a graph $X$ and a subset of vertices $U \subseteq V(X)$, we define $X(U)$ to be the induced subgraph of $X$ on $U$, i.e. $V(X(U)) = U$ and $E(X(U)) = \{(u_1,u_2) \in E(X): u_1,u_2 \in U\}$.
\end{Def}
With these definitions, we now state our conjecture.
\begin{Conj}\label{nonabelian}
For any Cayley graph $X = (G,S)$ and any $U \subseteq G$ such that $|U| > \frac{|G|}{2}$, the induced subgraph $X(U)$ of $X$ on $U$ has maximum degree at least $\sqrt{|S|/2}$.
\end{Conj}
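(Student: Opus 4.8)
I will describe an approach that I expect to carry through when $G$ is abelian (the form of the statement the paper presumably proves); the fully general conjecture seems to need genuinely new ideas. The engine is Huang's spectral method \cite{Hua19}, in the shape Alon and Zheng \cite{AZ20} gave it for Cayley graphs of $\Z_2^n$. Call a Hermitian matrix $B$, whose rows and columns are indexed by $G$ — or, allowing a blow-up, by $G \times [m]$ with $m\times m$ blocks — \emph{compatible} with $X=\Gamma(G,S)$ if $B_{g,h}$ vanishes whenever $(g,h)\notin E(X)$ and has operator norm $1$ whenever $(g,h)\in E(X)$. Then for every $U\subseteq G$ the principal submatrix $B[U]$ is compatible with the induced subgraph $X(U)$, so $\lambda_{\max}(B[U]) \le \max_{u\in U}\sum_{v\in U}\|B_{u,v}\| = \Delta(X(U))$. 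Hence it suffices to exhibit a compatible $B$ for which the number of eigenvalues that are at least $\sqrt{|S|/2}$ strictly exceeds half the size of $B$: since $|U|>|G|/2$, Cauchy interlacing applied to $B[U]$ then forces $\lambda_{\max}(B[U]) \ge \sqrt{|S|/2}$, and we are done. As in Huang's proof the cleanest way to obtain such a spectrum is to arrange $B^2 = cI$ with $c\ge |S|/2$, since $\mathrm{tr}(B)=0$ then splits the spectrum into $\pm\sqrt c$ with equal multiplicities; when that exact identity is not attainable one instead aims for $B^2 \succeq (|S|/2)\,I$ together with enough control on the signs of the eigenvalues.

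To construct $B$, first assume (as in the Remark) that $S$ generates $G$, and write $G \cong \Z_{n_1}\times\cdots\times\Z_{n_k}$ by the structure theorem. On $\C[G]$ take, for each $s\in S$, the permutation matrix $P_s$ of translation by $s$; because $G$ is abelian the $P_s$ all commute, which is simultaneously the obstruction (a linear combination of commuting matrices has too many distinct eigenvalues) and the opening (one twists by phases). Concretely, one looks for diagonal unitaries $D_s$, with $D_{s^{-1}}$ conjugate to $D_s^{-1}$, so that the signed translations $A_s := D_sP_s$ satisfy $A_s^* = A_{s^{-1}}$ and so that, after combining each inverse pair $\{s,s^{-1}\}$ into one Hermitian operator, the resulting family pairwise anticommutes; then $B:=\sum_{s\in S}A_s$ is compatible with $X$ and, on expanding $B^2$, the cross terms cancel in anticommuting pairs. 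Writing $D_s = e^{2\pi i\varphi_s}$, this amounts to solving a bilinear system for phase functions $\varphi_s:G\to\R/\Z$ of the shape $\varphi_s(x)-\varphi_s(xs)\equiv 0$ and $[\varphi_s(x)-\varphi_s(xt)]+[\varphi_t(x)-\varphi_t(xs)]\equiv \tfrac12$. Over $\Z_2^n$ this is exactly the system of \cite{AZ20}, where taking the $\varphi_s$ affine-linear reduces it to producing a symplectic-type pairing among the generators; for general abelian $G$ the same template should work once the cyclic factors of order $>2$ are handled, with an inverse pair $\{s,s^{-1}\}$ (order of $s$ larger than $2$) behaving like a single \emph{complex} generator that contributes $1$, not $2$, to the final $c$ — which is precisely where the factor $\tfrac12$ in $\sqrt{|S|/2}$ comes from.

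The hard part, I expect, is exactly this last step: guaranteeing that the bilinear phase system is solvable for an arbitrary generating set, and bookkeeping the constant $c\ge|S|/2$. Even over $\Z_2^n$ there is a real subtlety, since a chosen set of ``basis directions'' among the generators may express the remaining generators in a way that conflicts with a prescribed pairing, forcing one either to enlarge the space or to choose the ordering and pairing cleverly — this is the technical core of \cite{AZ20}. Extending it to cyclic factors $\Z_n$ with $n>2$ adds two new points of care: the non-involution generators genuinely require unitary (complex) phases rather than $\pm1$ signs, and one must check that nothing degenerates when $n$ is even and $s^{n/2}$ happens to lie in $S$. I also expect the smallest cyclic factors (the $3$-cycle, the $5$-cycle, and the like) to need separate, essentially elementary, treatment: there the target reduces to ``$X(U)$ contains an edge,'' which is immediate from the independence number yet, interestingly, does not seem to follow from a naive signed adjacency matrix on $\C[G]$ alone.
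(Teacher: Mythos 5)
Your proposal rests on constructing, for an arbitrary abelian Cayley graph $\Gamma(G,S)$, a compatible Hermitian matrix $B$ (a unitary signing, possibly after a blow-up) with $B^2 \succeq (|S|/2)I$ and at least half of its eigenvalues $\ge \sqrt{|S|/2}$. That construction is the entire content of the argument, and you have not carried it out: you reduce it to a ``bilinear phase system'' and then say you expect the solvability of that system to be the hard part. This is not a fixable bookkeeping issue but a likely dead end. Alon and Zheng themselves observed that some Cayley graphs of $\Z_2^n$ of degree $2n+1$ admit \emph{no} unitary signing of the kind you want, so the tight bound cannot in general be obtained by exhibiting a good signing; even the much weaker question of finding a signing of a bipartite abelian Cayley graph with all eigenvalues of modulus $|S|^{\Omega(1)}$ is posed in this paper as an open problem. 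There is also a hard dimension obstruction behind this: a family of $k$ pairwise anticommuting Hermitian involutions forces the underlying space to have dimension at least $2^{\lfloor k/2\rfloor}$, so once $|S|$ is large compared to $\log|G|$ your anticommutation scheme cannot exist on $\C[G]$, and it is not known how to rescue it by blow-ups. Your interlacing frame is fine (and the count ``at least half the eigenvalues $\ge\sqrt{c}$'' suffices, since $|U|>|G|/2$ makes the relevant interlacing index at most $|G|/2$), but the matrix it is supposed to be applied to is missing.

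The paper proves the abelian case by a route that avoids signings entirely, and you may want to compare. First, it covers the product-of-cycles graph $\Gamma(\Z_{m_1}\times\cdots\times\Z_{m_d},\{\pm e_1,\dots,\pm e_d\})$ by translates of the Boolean hypercube $Q_d$; an averaging argument finds one translate meeting $U$ in more than $2^{d-1}$ points, and Huang's theorem is then invoked as a black box on that copy of $Q_d$. Second, for a general abelian $G$ with generating set $S$, it takes the surjective homomorphism $A:\Z_m^d\to G$ sending $e_i\mapsto s_i$ (one coordinate per generator, pairing $s$ with $s^{-1}$), pulls $U$ back to $A^{-1}(U)$, which still occupies more than half of $\Z_m^d$, applies the product-of-cycles result there, and pushes the resulting high-degree vertex and its $\sqrt{d}$ distinct neighbors forward to $U$; the count $d=(|S|+t)/2\ge|S|/2$ (with $t$ the number of involutions in $S$) is where the factor $\tfrac12$ actually comes from. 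No new spectral construction is needed beyond Huang's original one for $Q_d$. If you want to salvage your approach, you would need to either solve the signing existence question (which is open and partially refuted) or reorganize it into this covering-and-pullback form.
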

\section{Proof of the conjecture for abelian groups}
In this section, we prove that \conjref{nonabelian} is true for abelian groups. More preceisely, we prove the following theorem.
\begin{Thm}
\label{abelian}
For any Cayley graph $X = \Gamma(G, S)$ such that $G$ is abelian and any $U \subseteq G$ of size $|U| > |G|/2$, the induced subgraph $X(U)$ of $X$ on $U$ has maximum degree at least $\sqrt{(|S|+t)/2}$ where $t$ is the number of elements in $S$ of order 2.
\end{Thm}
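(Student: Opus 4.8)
The plan is to run Huang's eigenvalue‑interlacing argument with an \emph{operator‑valued} (block) signing of $X$ in place of a scalar one. The upgrade to block signings is essential rather than cosmetic: already for $\Gamma(\Z_5,\{1,-1\})=C_5$, no Hermitian matrix supported on the edges of $C_5$ with entries of modulus at most $1$ has its third‑largest eigenvalue at least $1$, so a direct scalar interlacing argument cannot even reach the (tight) bound $\sqrt{(|S|+t)/2}=1$ there. So instead fix $D=2^{\lceil|S|/2\rceil}$, replace each vertex of $X$ by a copy of $\C^D$, and aim to build a Hermitian matrix
\[
\widetilde A \;=\; \sum_{s\in S} P_s\otimes M_s \qquad\text{on }\C^G\otimes\C^D,
\]
where $P_s$ is the permutation matrix of left translation by $s$ and $M_s\in\C^{D\times D}$, such that: (a) the block of $\widetilde A$ in position $(g,h)$ is $M_{gh^{-1}}$ when $gh^{-1}\in S$ and is $0$ otherwise (so $\widetilde A$ is supported on the edge‑blocks of $X$, with zero diagonal); (b) $\|M_s\|_{\mathrm{op}}\le 1$ for every $s$; and (c) $\widetilde A^2=k\,I$ with $k=(|S|+t)/2$. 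Granting this, $\widetilde A$ is traceless with $\widetilde A^2=kI$, hence has eigenvalues $\pm\sqrt k$, each of multiplicity $|G|D/2$ (note $D\ge2$ is even, so $|G|D$ is even). For $U\subseteq G$ with $|U|>|G|/2$ we have $|U|D\ge |G|D/2+1$, so Cauchy interlacing forces the top eigenvalue of the principal submatrix $\widetilde A[U\otimes\C^D]$ to be at least $\lambda_{|G|D-|U|D+1}(\widetilde A)=\sqrt k$; conversely, since every block of $\widetilde A$ is a contraction, a one‑line domination estimate (for unit $x=(x_g)_g$, $|x_g^{*}M_{gh^{-1}}x_h|\le\|x_g\|\|x_h\|$) bounds that top eigenvalue by $\lambda_{\max}$ of the ordinary adjacency matrix of $X(U)$, hence by the maximum degree of $X(U)$. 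Comparing the two bounds gives the theorem.

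Everything thus reduces to constructing the blocks $M_s$. Since $X$ is abelian, $\widetilde A$ commutes with all translations $P_g\otimes I$ and hence block‑diagonalizes over $\widehat G$; equivalently, expanding $\widetilde A^2=\sum_{w}P_w\otimes\big(\sum_{ss'=w}M_sM_{s'}\big)$ and matching coefficients, conditions (a)--(c) become: $M_{s^{-1}}=M_s^{*}$ (Hermiticity); $\sum_{s\in S}M_sM_s^{*}=k\,I$ (the $w=e$ coefficient); and $\sum_{ss'=w,\ s,s'\in S}M_sM_{s'}=0$ for each $w\ne e$. To meet these I would start from a Clifford system: Hermitian matrices $\gamma_1,\dots,\gamma_{|S|}$ on $\C^D$ with $\gamma_a\gamma_b+\gamma_b\gamma_a=2\delta_{ab}I$ (these exist for $D=2^{\lceil|S|/2\rceil}$). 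Assign one $\gamma_a$ to each of the $t$ order‑$2$ elements $s\in S$ and set $M_s=\gamma_a$; assign two, say $\gamma_a,\gamma_b$, to each of the $(|S|-t)/2$ inverse‑pairs $\{s,s^{-1}\}$ with $s$ of order $>2$ and set $M_s=\tfrac12(\gamma_a+\sqrt{-1}\,\gamma_b)$, $M_{s^{-1}}=\tfrac12(\gamma_a-\sqrt{-1}\,\gamma_b)$. This uses $t+2\cdot\tfrac{|S|-t}{2}=|S|$ of the $\gamma$'s, which is why $D=2^{\lceil|S|/2\rceil}$ suffices; and the factor $\tfrac12$, together with $M_s^2=0$ for pair‑elements, is exactly what converts $|S|$ into $k=(|S|+t)/2$.

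Verifying (a)--(c) is then quick. Hermiticity and $\|M_s\|_{\mathrm{op}}=1$ are immediate from the Clifford relations (for a pair‑element, $M_sM_s^{*}=\tfrac12(I+\sqrt{-1}\,\gamma_b\gamma_a)$ is a projection). For the $w=e$ coefficient, each class contributes the identity — $\gamma_a^2=I$ for an order‑$2$ class, and $M_sM_{s^{-1}}^{*}+M_{s^{-1}}M_s^{*}=\tfrac12(I+\sqrt{-1}\gamma_b\gamma_a)+\tfrac12(I-\sqrt{-1}\gamma_b\gamma_a)=I$ for a pair‑class — so the sum over the $k$ classes is $k\,I$. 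For $w\ne e$: a term with $s=s'$ contributes $M_s^2$, which is $0$ for pair‑elements and impossible for order‑$2$ elements (that would force $w=e$); a term with $s\ne s'$ pairs with the term $(s',s)$ into an anticommutator $M_sM_{s'}+M_{s'}M_s$, which vanishes whenever $s,s'$ lie in different classes because then $M_s$ and $M_{s'}$ are polynomials in \emph{disjoint} sets of pairwise‑anticommuting $\gamma$'s — while two distinct elements of one pair‑class satisfy $ss'=e\ne w$ and so never both occur. Hence $\widetilde A^2=kI$. (When $G=\Z_2^n$ every element has order $2$, so $t=|S|$, $k=|S|$, $M_s=\gamma_s$, and one recovers a form of the result of Alon and Zheng.)

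I expect the genuinely new point — and the main obstacle — to be the first one: recognizing that scalar signings are inadequate here and that the right object is an operator‑valued signing with contractive edge‑blocks. Once that is in place, the interlacing/domination half of the argument is immediate and the remainder is Clifford bookkeeping. Within that bookkeeping, the step I would write out most carefully is the vanishing of the $w\ne e$ coefficients — namely, that ``accidental'' coincidences $w=ss'=s''s'''$ among \emph{different} classes cause no trouble; as sketched this is automatic from the pairwise anticommutation of the $\gamma$'s, since any such coincidence contributes a sum of cross‑class anticommutators. The parity checks ($|G|D$ even, and the exact interlacing index) are routine as soon as $D\ge2$ is even.
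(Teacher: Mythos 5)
Your proposal is correct, but it reaches \thmref{abelian} by a genuinely different route. The paper never builds a signing of $\Gamma(G,S)$ at all: it first covers the torus $\Z_{m_1}\times\cdots\times\Z_{m_d}$ by translates $U_r$ of the discrete cube, averages to find one translate containing more than $2^{d-1}$ points of $U$, and invokes Huang's theorem as a black box on that copy of $Q_d$ (\corref{B2pC}); it then pulls $U$ back along the surjective homomorphism $A:\Z_m^d\to G$, $e_i\mapsto s_i$, \`a la Alon--Zheng, checking that the $\ge\sqrt{d}$ neighbours found upstairs have distinct images downstairs, with the count $d=(|S|+t)/2$ entering only at the last line. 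You instead re-run Huang's interlacing argument directly on $\Gamma(G,S)$ via an operator-valued signing $\widetilde A=\sum_{s\in S}P_s\otimes M_s$ with contractive blocks and $\widetilde A^2=\tfrac{|S|+t}{2}I$, taking $M_s=\gamma_a$ for involutions and $M_{s^{\pm1}}=\tfrac12(\gamma_a\pm\sqrt{-1}\,\gamma_b)$ for inverse pairs in a Clifford system. I verified the block identities ($M_s^2=0$ for pair elements, $M_sM_s^*+M_s^*M_s=I$ per pair class, cross-class anticommutation annihilating every $w\neq e$ coefficient even when $w$ admits several factorizations in $S$, tracelessness, the interlacing index $|G|D-|U|D+1\le |G|D/2$, and the domination of $\lambda_{\max}$ of the compressed block matrix by the maximum degree of $X(U)$), and they all go through; your $C_5$ remark is motivational only and carries no weight in the argument. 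As for what each approach buys: the paper's is shorter and exhibits the abelian case as a formal consequence of Huang's theorem, while yours is self-contained modulo standard Clifford algebra facts, avoids enlarging the group, and bears directly on the paper's second open problem---it shows that although scalar signings cannot work for odd cycles, every abelian Cayley graph admits a block signing with unit-norm Hermitian-compatible blocks and spectrum exactly $\pm\sqrt{(|S|+t)/2}$.
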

We prove this theorem in two steps:
\begin{enumerate}
\item We show that Huang's theorem \cite{Hua19} implies that the same property holds for products of cycles.
\item We generalize the argument used by Alon and Zheng \cite{AZ20} to prove \conjref{nonabelian} for $G = \mathbb{Z}_2^n$ to prove the result for all abelian $G$.
\end{enumerate}
\subsection{From the Boolean hypercube to products of cycles}
We recall Huang's theorem and show how it implies the same property for products of cycles.

\begin{Thm}[\cite{Hua19}]  
For every integer $n \ge 1$, if $H$ is an induced subgraph of $Q_n$ with at least $(2^{n-1}+1)$ vertices, then the maximum degree of $H$ is at least $\sqrt{n}$.
\end{Thm}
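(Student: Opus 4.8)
The plan is to produce a carefully \emph{signed} version of the adjacency matrix of $Q_n$ whose spectrum is pinned to $\pm\sqrt{n}$, and then combine the Cauchy interlacing theorem with the elementary bound ``spectral radius $\le$ maximum row sum'' to force a large maximum degree on any induced subgraph on more than half the vertices. The whole argument is spectral: the combinatorial conclusion about degrees follows from an eigenvalue inequality.

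First I would build a sequence of symmetric matrices $A_n \in \{0, \pm 1\}^{2^n \times 2^n}$ recursively, setting $A_1 = \begin{pmatrix} 0 & 1 \\ 1 & 0 \end{pmatrix}$ and
\[
A_n = \begin{pmatrix} A_{n-1} & I \\ I & -A_{n-1} \end{pmatrix},
\]
where $I$ is the $2^{n-1} \times 2^{n-1}$ identity. By construction the off-diagonal support of $A_n$ is exactly the edge set of $Q_n$, so $A_n$ is a signed adjacency matrix of the hypercube, differing from the ordinary adjacency matrix only in signs. The crucial algebraic fact is that $A_n^2 = n I$, which I would prove by induction via a direct block computation:
\[
A_n^2 = \begin{pmatrix} A_{n-1}^2 + I & 0 \\ 0 & A_{n-1}^2 + I \end{pmatrix},
\]
the off-diagonal blocks vanishing because $A_{n-1} I - I A_{n-1} = 0$; the inductive hypothesis $A_{n-1}^2 = (n-1)I$ then gives $A_n^2 = n I$. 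Consequently every eigenvalue of $A_n$ equals $\pm\sqrt{n}$, and since $A_n$ has zero trace the two values occur with equal multiplicity $2^{n-1}$. In particular the $2^{n-1}$ largest eigenvalues of $A_n$ are all equal to $\sqrt{n}$.

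Next I would pass to the induced subgraph. Let $U \subseteq V(Q_n)$ with $|U| = 2^{n-1}+1$ and let $B = A_n[U]$ be the principal submatrix of $A_n$ indexed by $U$; this is precisely a signed adjacency matrix of $H = Q_n(U)$. Applying Cauchy's interlacing theorem, and noting that passing from $A_n$ (size $2^n$) to $B$ (size $2^{n-1}+1$) deletes only $2^{n-1}-1$ rows-and-columns, the top eigenvalue satisfies $\mu_1(B) \ge \lambda_{2^{n-1}}(A_n) = \sqrt{n}$, since the largest $2^{n-1}$ eigenvalues of $A_n$ all equal $\sqrt{n}$. To finish, I would bound $\mu_1(B)$ from above by the maximum degree of $H$: for any real symmetric matrix the spectral radius is at most the maximum absolute row sum, and for the signed adjacency matrix $B$ that row sum at a vertex is exactly its degree in $H$. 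Hence $\Delta(H) \ge \mu_1(B) \ge \sqrt{n}$.

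The main obstacle, and the genuine heart of the argument, is the discovery of the correct signing. Almost any naive assignment of signs to the edges of $Q_n$ fails to yield $A_n^2 = n I$; it is exactly this identity that collapses the spectrum onto the two values $\pm\sqrt{n}$ with perfectly balanced multiplicities, and that balance is precisely what interlacing needs in order to guarantee a surviving eigenvalue of size $\sqrt{n}$. Once the spectrum is determined, the interlacing and row-sum steps are entirely standard, so essentially all of the difficulty is concentrated in constructing $A_n$ and verifying the squaring identity.
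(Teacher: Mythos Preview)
Your argument is correct and is precisely Huang's original proof: the recursive signed adjacency matrix satisfying $A_n^2 = nI$, the eigenvalue count via trace, Cauchy interlacing down to a principal submatrix of size $2^{n-1}+1$, and the bound $\mu_1(B) \le \Delta(H)$ via the maximum absolute row sum. One cosmetic remark: you fix $|U| = 2^{n-1}+1$, whereas the statement allows $|U| \ge 2^{n-1}+1$; either observe that the interlacing step only improves for larger $|U|$, or note that enlarging $U$ can only increase degrees in the induced subgraph.

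As for comparison with the paper: the paper does \emph{not} supply its own proof of this theorem. It is quoted as a black box from \cite{Hua19} and then used as input to Corollary~\ref{B2pC}. So there is nothing in the paper to compare your argument against; you have reproduced the cited result's proof, which is exactly what the authors are invoking.
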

\begin{Cor}
\label{B2pC}
Let $G = \Z_{m_1} \times \cdots \times \Z_{m_d}$, $S = \{\pm{e_1}, \dots, \pm{e_d}\}$, and $X = \Gamma(G, S)$. For any $U \subseteq G$ of size $|U| > |G|/2$, there is an element $u \in U$ and $k \geq \sqrt{d}$ distinct indices $i_1,\ldots,i_k \in [d]$ such that for all $j \in [k]$, either $u + e_{i_j} \in U$ or $u - e_{i_j} \in U$.
\end{Cor}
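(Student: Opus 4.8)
The plan is to derive the corollary from Huang's theorem by exhibiting, inside $X=\Gamma(G,S)$, a spanning subgraph that is a disjoint union of Boolean hypercubes $Q_d$, and then locating one of those copies on which $U$ is dense enough for Huang's theorem to apply.

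First I would reduce to the case where every modulus $m_i$ is even. If some $m_i$ is odd, replace the $i$-th factor $\Z_{m_i}$ by $\Z_{2m_i}$ (replacing the cycle $C_{m_i}$ by its connected double cover $C_{2m_i}$); doing this for all odd $i$ produces a group $G'$ all of whose moduli are even, together with the surjective group homomorphism $\rho\colon G'\to G$ that reduces each enlarged coordinate modulo $m_i$ and is the identity on the others. Then $\rho$ sends the standard generator $e_i'$ of the $i$-th factor of $G'$ to $e_i$, hence sends edges of $\Gamma(G',S')$ to edges of $X$; and since $\rho$ has constant fibre size, $U'=\rho^{-1}(U)$ satisfies $|U'|/|G'|=|U|/|G|>1/2$ while the number of factors $d$ is unchanged. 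A witness $(u',i_1,\dots,i_k)$ for $(G',U')$ pushes forward along $\rho$ to a witness for $(G,U)$, since $\rho(u')\in U$ and $\rho(u'\pm e_{i_j}')=\rho(u')\pm e_{i_j}$ again lies in $U$. So from now on assume $m_i=2k_i$ for every $i$.

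For even $m_i$, split the edge set of $C_{m_i}$ into two perfect matchings and let $M_i$ be the one pairing each even residue with the next, $M_i=\{\{0,1\},\{2,3\},\dots,\{m_i-2,m_i-1\}\}$. Let $Y$ be the spanning subgraph of $X$ whose edges are the pairs $\{u,u'\}$ agreeing in all but one coordinate $i$ and whose two $i$-th entries form an edge of $M_i$; equivalently, $Y$ is the Cartesian product of the $d$ matchings. As each $M_i$ is, as a graph on $\Z_{m_i}$, a disjoint union of $k_i$ copies of $Q_1$, the graph $Y$ is a disjoint union of $\prod_i k_i=|G|/2^d$ copies of $Q_d$---one copy for each choice of an edge of $M_i$ in every coordinate. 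Since $|U|>|G|/2=2^{d-1}\cdot(|G|/2^d)$, the numbers $|U\cap C|$ taken over these copies $C$ sum to strictly more than $2^{d-1}$ times the number of copies, so some copy $C$ has $|U\cap C|\ge 2^{d-1}+1$. Applying Huang's theorem to $Q_d\cong C$ with vertex set $U\cap C$ gives a vertex $u\in U\cap C$ whose degree in the induced subgraph of $C$ on $U\cap C$ is at least $\sqrt d$; because the $d$ edges at any vertex of $Q_d$ lie in $d$ distinct coordinate directions, this yields $k\ge\sqrt d$ indices $i_1,\dots,i_k$ in each of which $u$ has a neighbour in $U$ along an $M_{i_j}$-edge. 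Such an edge joins $u$ either to $u+e_{i_j}$ or to $u-e_{i_j}$ depending on the parity of the $i_j$-th entry of $u$, so for each $j$ either $u+e_{i_j}\in U$ or $u-e_{i_j}\in U$, which is exactly the claim.

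The only genuine idea here is the observation that the ``alternate-edge'' spanning subgraph of a product of even cycles breaks into hypercubes $Q_d$, one direction per cycle, so that Huang's theorem applies verbatim to each piece and a single averaging step over the pieces handles the global count; the reduction to even moduli and the degenerate coordinates with $m_i=2$ (where $e_i=-e_i$ and the two matchings coincide) need only care, not new ideas. The step I would most watch is the bookkeeping that the number of hypercube copies is exactly $|G|/2^d$, so that the strict inequality $|U|>|G|/2$ really does force some copy past Huang's threshold $2^{d-1}$.
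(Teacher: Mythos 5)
Your proof is correct, and it reaches the corollary by a genuinely different decomposition than the paper's. The shared core is the same: find a copy of $Q_d$ sitting inside $\Gamma(G,S)$ on which $U$ occupies more than half the vertices, and invoke Huang's theorem there. The paper does this with a \emph{covering} argument: for every $r \in G$ it takes the translate $U_r = \{r + \sum_{i \in T} e_i : T \subseteq [d]\}$, notes that each $U_r$ induces a $Q_d$ regardless of the parities of the $m_i$, and computes $\E_r\,|U_r \cap U| = 2^d\,|U|/|G| > 2^{d-1}$ over the $|G|$ overlapping translates to find a good one. You instead build a \emph{partition} of $G$ into $|G|/2^d$ vertex-disjoint hypercubes as a Cartesian product of perfect matchings $M_i \subseteq C_{m_i}$, which is only possible when every $m_i$ is even and therefore forces your preliminary double-cover reduction $\rho\colon G' \to G$ (together with the care you correctly flag at $m_i = 2$, where $e_i = -e_i$). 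Both routes are sound: your witness push-forward along $\rho$ preserves membership in $U$, the index set $[d]$, and the distinctness of the directions; your count $\prod_i (m_i/2) = |G|/2^d$ of copies is right, so $|U| > |G|/2$ does force some copy past the threshold $2^{d-1}$; and the $M_{i_j}$-neighbour of $u$ is indeed $u + e_{i_j}$ or $u - e_{i_j}$ according to the parity of the $i_j$-th entry. What the paper's overlapping translates buy is the elimination of your entire first paragraph, since averaging works just as well over a cover as over a partition; what your partition buys is a cleaner, integer-valued pigeonhole step and the (unused here, but sometimes useful) fact that the hypercube copies can be taken pairwise disjoint.
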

\begin{proof}
To prove this, we cover $X$ with copies of the Boolean hypercube $Q_d$.
\begin{Def}
Let $U_r = \{r + \sum_{i \in T} e_i:  T \subseteq [d]\}$.
\end{Def}
Observe that $\E_r|U_r \cap U| > 2^{n-1}$ where the expectation is over uniform random $r \in G$. Thus, there must be some $g \in G$ that satisfies $|U_g \cap U| > 2^{n-1}$. Since $X(U_g)$ is isomorphic to the Boolean cube $Q_d$ of dimension $d$, by Huang's theorem, the induced subgraph $X(U_g \cap U)$ of $X$ on $U_g \cap U$ has maximum degree at least $\sqrt{d}$.
\end{proof}

\begin{figure}[H]
\label{coveringargument}
  \centering
    \includegraphics[width=0.2\textwidth]{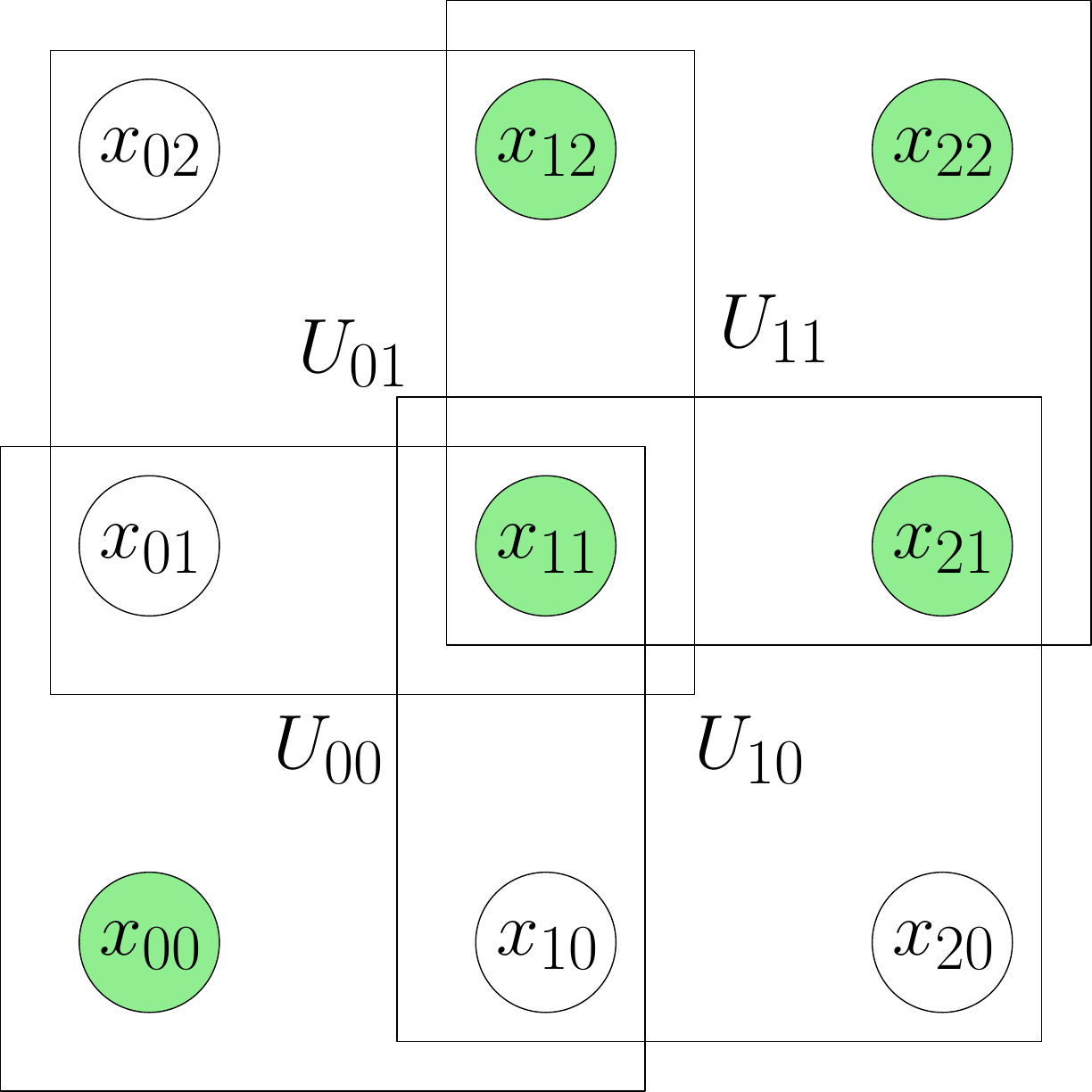}
      \caption{An illustration of some of the sets $\{U_r: r \in G\}$ when $d = 2$ and $m_1 = m_2 = 3$. The sets $U_{02},U_{12},U_{20},U_{21},U_{22}$ wrap around and are not shown.}
\end{figure}
\subsection{From products of cycles to abelian Cayley graphs}
We now apply an argument of Alon and Zheng \cite{AZ20} to prove \thmref{abelian}.

\begin{proof}[Proof of \thmref{abelian}]
We can assume without loss of generality that $G = \Z_{m_1} \times \cdots \times \Z_{m_k}$. Denote $S = \{s_1, \dots, s_t, s_{t+1} \dots, s_d, -s_{t+1}, \dots, -s_{d}\}$ and let $m = lcm(m_1, \dots, m_k)$. We consider the Cayley graph $X' = \Gamma(\Z_m^d, T)$ where $T = \{\pm{e_1}, \dots, \pm{e_d}\}$. Let $A : \Z_m^d \to G$ be a linear map defined by $A(e_i) = s_i$. Note that $A$ is well-defined because $\text{ord}(s_i)|m$ for all $i \in [d]$.

Since $S$ is a generating set of $G$, the linear map $A$ is onto. Thus, for all $g \in G$, $A^{-1}(g)$ has size $m^d/|G|$. It follows that $A^{-1}(U)$ has size $|A^{-1}(U)| = (m^d/|G|)|U| > (m^d/|G|)(|G|/2) = m^d/2$. By \corref{B2pC}, there is a vertex $h \in A^{-1}(U)$ and $k \geq \sqrt{d}$ distinct indices $i_1,\ldots,i_k \in [d]$ such that for all $j \in [k]$, either $h + e_{i_j}$ or $h - e_{i_j}$ is in $A^{-1}(U)$. Take $h_j \in \{h + e_{i_j}, h - e_{i_j}\}$ so that $h_j \in U$ (if both elements are in $U$ then this choice is arbitrary) and observe that for all $j' \neq j \in [k]$, 
\[
A(h_{j'}) - A(h_j) = A(h) \pm A(e_{i_{j'}}) - A(h) \mp A({e_{i_j}}) = \pm{s_{i_{j'}}} \mp{s_{i_j}} \neq 0.
\]
Thus, all $A(h_1), \dots, A(h_k)$ are distinct, contained in $U$, and adjacent to $A(h) \in U$ in $X(U)$ where $X = \Gamma(G,S)$. Finally, $|S| = t+2(d-t)$ and hence $d = (|S|+t)/2$, as desired.
\end{proof}

\section{A counterexample for regular, bipartite graphs} 
In this section, we observe that for \conjref{nonabelian} to hold, it is not sufficient for $X$ to be regular and bipartite. In particular, we construct a regular bipartite graph $X = (L, R, E)$ such that there exists a subset of size $|L|+1$ that induces a subgraph with maximum degree 1.

\begin{figure}[H]
\label{badexample}
  \centering
    \includegraphics[width=0.2\textwidth]{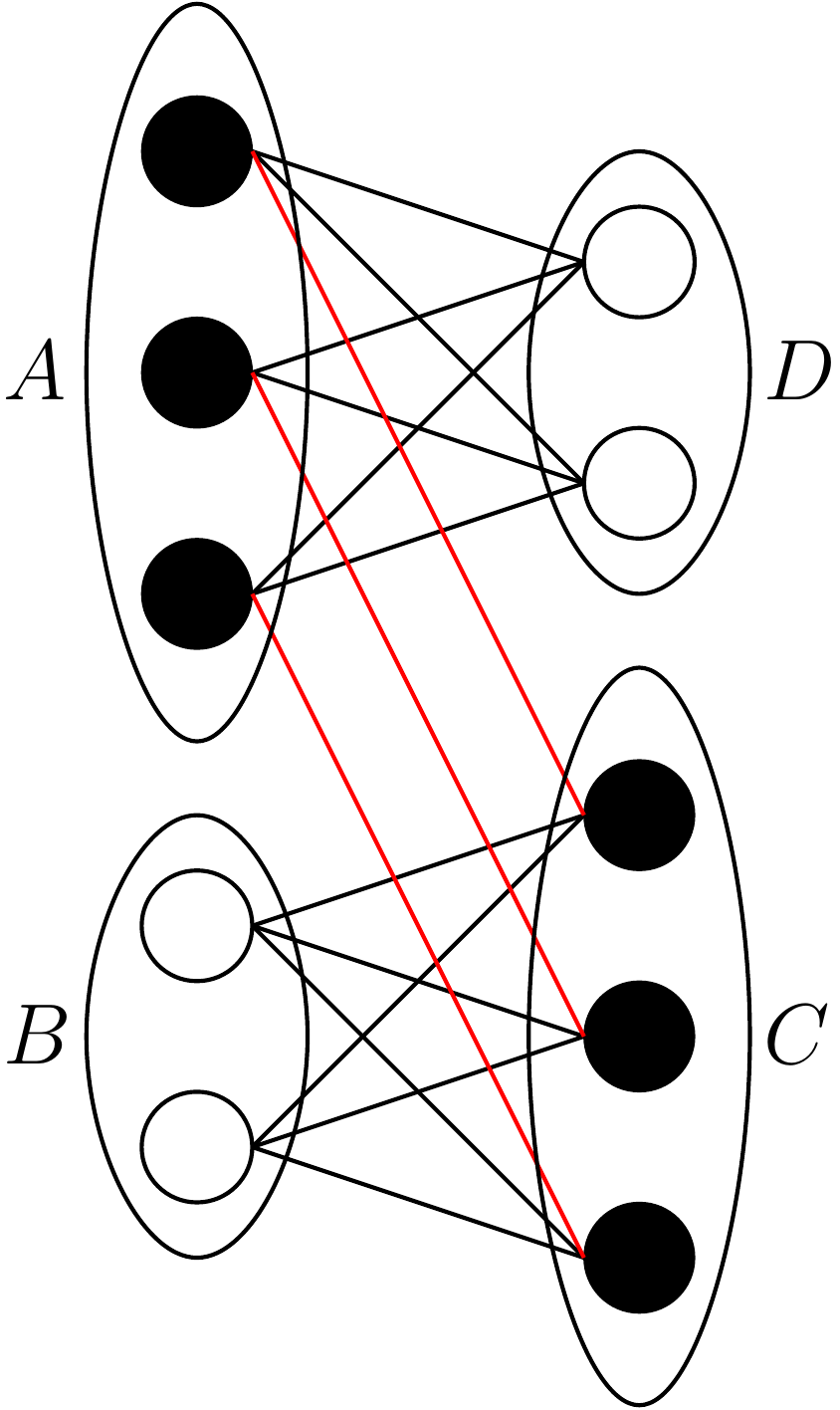}
      \caption{An illustration of the graph $X$ for $n = 2$.}
\end{figure}

Let $A, B, C, D$ be disjoint sets of size $|A| = |C| = n+1$ and $|B| = |D| = n$. Let $L = A \cup B$ and $R = C \cup D$. Let $E$ be the union of a perfect matching between $A$ and $C$, the set of all edges between $A$ and $D$, and the set of all edges between $B$ and $C$. It is straightforward to check that $X$ is $(n+1)$-regular, but the set $A \cup B$ has size $2(n+1) = |L|+1$ and induces a subgraph with maximum degree 1. A concrete drawing of such graph for $n = 2$ is shown in Figure 2.

\section{Open problems}
The obvious open problem is to prove or disprove \conjref{nonabelian}. There are a few more problems that we find interesting:
\begin{enumerate}
\item
Our counterexample shows that we cannot replace being Cayley by just regular in \conjref{nonabelian}. What about vertex-transitive? Can we find a counterexample where $X$ is vertex-transitive or find evidence that being vertex-transitive is sufficient?

\item
Huang actually proved a stronger claim that the Boolean cube $Q_n$ admits an \emph{orthogonal signing}, which is a signed adjacency matrix of $Q_n$ whose eigenvalues are either $-\sqrt{n}$ or $\sqrt{n}$. Alon and Zheng \cite{AZ20} considered the more general \emph{unitary signing} and showed that any Cayley graph of $\Z_2^n$ with respect to $S$ of size at most $n+1$ admits such a signing. They also observed that some Cayley graphs of $\Z_2^n$ of degree $2n+1$ do not admit such signings, which implies that the tight maximum degree bound of induced subgraph cannot always be proved by finding a good signing. Despite that, finding a signing such that all eigenvalues have large modulus still seems to be an interesting problem. For Cartesian products of $d$ many even-length cycles, it is not difficult to find a signing with all eigenvalues at least $\Omega_k(\sqrt{d})$ in absolute values, where $k$ is the maximum length of the cycles in the product. Can we find a signing for any finite bipartite abelian Cayley graph $\Gamma(G, S)$ with all eigenvalues at least $|S|^{\Omega(1)}$ in absolute value?

\end{enumerate}

\section*{Acknowledgement}
We would like to thank Andrew Drucker for helpful conversations.

\bibliography{reference}

\end{document}